\documentclass[a4,twoside,12pt]{amsart}

\usepackage{a4wide}
\usepackage{amsfonts}
\usepackage{amsmath}
\usepackage{amssymb}
\usepackage{amsthm}
\usepackage[english]{babel}
\usepackage{enumerate}
\usepackage{fontenc}
\usepackage[latin1]{inputenc}
\usepackage[english]{varioref}
\usepackage{url}

\frenchspacing
\pagestyle{headings}

\title[Intersection cohomology and Severi's varieties]
{Intersection cohomology and Severi's varieties}

%    Information for first author
\author{Vincenzo Di Gennaro }
%    Address of record for the research reported here
\address{Universit\`a di Roma \lq\lq Tor Vergata\rq\rq, Dipartimento di Matematica,
Via della Ricerca Scientifica, 00133 Roma, Italy.}
\email{digennar@axp.mat.uniroma2.it}
%    \thanks will become a 1st page footnote.
%\thanks{}

%    Information for second author
\author{Davide Franco }
\address{Universit\`a di Napoli
\lq\lq Federico II\rq\rq, Dipartimento di Matematica e
Applicazioni \lq\lq R. Caccioppoli\rq\rq, Via Cintia, 80126
Napoli, Italy.} \email{davide.franco@unina.it}

\theoremstyle{plain}
\newtheorem{theorem}{Theorem}[section]
\newtheorem{corollary}[theorem]{Corollary}

\newtheorem{proposition}[theorem]{Proposition}
\newtheorem{definition}[theorem]{Definition}

\theoremstyle{definition}

\newtheorem{remark}[theorem]{Remark}
\newtheorem{notations}[theorem]{Notations}

\DeclareMathOperator{\con}{Con}
\DeclareMathOperator{\sing}{Sing}

\newcommand{\vc}{\ensuremath{\mathcal{V}}}
\newcommand{\ic}{\ensuremath{\mathcal{I}}}
\newcommand{\oc}{\ensuremath{\mathcal{O}}}

\newcommand{\fc}{\ensuremath{\mathcal{F}}}

\newcommand{\hc}{\ensuremath{\mathcal{H}}}
\newcommand{\cc}{\ensuremath{\mathcal{C}}}

\newcommand{\xc}{\ensuremath{\mathcal{X}}}

\newcommand{\bP}{\mathbb{P}}
\newcommand{\bPd}{{\mathbb{P}^{\vee}}}
\newcommand{\vX}{X^{\vee}}
\newcommand{\bQ}{\mathbb{Q}}
\newcommand{\bC}{\mathbb{C}}

\begin{document}

\begin{abstract} Let $X^{2n}\subseteq \bP ^N$ be a smooth projective
variety. Consider the intersection cohomology complex of the local
system $R^{2n-1}\pi{_*}\mathbb{Q}$, where $\pi$ denotes the
projection from the universal hyperplane family of $X^{2n}$ to
${(\bP ^N)}^{\vee}$. We investigate the cohomology of the
intersection cohomology complex $IC(R^{2n-1}\pi{_*}\mathbb{Q})$ over
the points of a Severi's variety, parametrizing nodal hypersurfaces,
whose nodes impose independent conditions on the very ample linear
system giving the embedding in $\bP ^N$.

\bigskip\noindent {\it{Keywords}}:    Intersection cohomology,
Decomposition Theorem,  Normal functions, Hodge conjecture,
Severi varieties

\medskip\noindent {\it{MSC2010}}\,:  Primary 14B05; Secondary 14E15, 14F05,
14F43, 14F45, 14M15, 32S20, 32S60, 58K15.
\end{abstract}

\maketitle

\medskip
\begin{center}
{\it{Dedicated to Ciro Ciliberto on his seventieth birthday.}}
\end{center}
\medskip

\section{Introduction}

In the last years a great deal of work has been devoted to focusing
on the  deep relationship among  Hodge conjecture and singularities
of normal functions (compare e.g. with \cite{GG}, \cite{KP} and
references therein). The theory  of normal functions, which dates
back to Poincar\'e, Lefschetz and Hodge, had a renewed interest in
last years  after a crucial remark of Green and Griffiths \cite{GG}
that the Hodge conjecture is equivalent to the existence of
appropriately defined singularities for the normal function defined
by means of a primitive Hodge cycle.

One of the starting points of this remark is a fundamental result of
Kleiman concerning the smoothability of algebraic cycles of
intermediate dimension (\cite{Kleiman}, \cite[Example
15.3.2]{Fulton}). In light of this, R. Thomas showed inductively
that the Hodge conjecture reduces to the following statement
concerning middle dimensional Hodge cycles: {\it for all even
dimensional smooth complex projective varieties $(X^{2n},\oc(1))$
and any class $A\in H^{n,n}(X, \bC)\cap H^{2n}(X, \bQ)$, there is a
nodal hypersurface $D\subset X$ in $\mid \oc_X(N)\mid$ for some $N$,
such that the Poincar\'e dual of $A$ is in the image of the
pushforward map $H_{2n}(D, \bQ) \to  H_{2n}(X, \bQ)$} \cite{Thomas}.

In their fundamental work \cite{GG}, Green and Griffiths further
clarified the question by relating it  to the singularities of
normal functions. Specifically, they showed that the above
hypersurface $D$ would be a singular point of the normal function
associated to the Hodge class $A$, thus reducing the Hodge
conjecture to the existence of such singularities.

In the paper \cite{GG}, one can find various definitions of
singularities of normal functions. Some of them are formalized by de
Cataldo and Migliorini by means of the Decomposition Theorem
\cite{DeCMHodgeConj}. More precisely, let $X=X^{2n}$ be a
$2n$-dimensional smooth complex projective variety embedded in
$\mathbb{P}^N_{\mathbb{C}}\equiv\mathbb{P}$ via a complete linear
system $\mid H \mid$. Denote by $X^{\vee}$ the \emph{dual variety}
of $X$ and consider the \textit{universal hyperplane family}
$$ X \stackrel{q}{\longleftarrow} \mathcal{X} \stackrel{\pi}{\longrightarrow} \bPd, \quad \quad \dim \xc = 2n-1+N.
$$
The hyperplane sections  are the fibers of $\pi$:
$$\xc _H:= \pi^{-1}(H), \quad \quad \dim \xc_H=2n-1.$$

We observe that the projection from the  universal hyperplane family
$\mathcal{X} \stackrel{\pi}{\longrightarrow} \bPd$ is a proper map
with equidimensional fibres and it is smooth outside the dual
variety $X^{\vee}$.

As explained in \cite[Sec. 2]{DeCMHodgeConj}, the Decomposition Theorem for $\pi$ provides a non-canonical decomposition
\begin{equation}
R\pi{_*}\mathbb{Q}_{\xc}\cong \bigoplus_{i\in \mathbb{Z}}\bigoplus
_{j\in \mathbb{N}}IC(L_{ij})[-i-(2n-1+N)], \quad \text{in} \,\,\,
D_c^b(\bPd),
\end{equation}
where $L_{ij}$ denotes a local system on a suitable stratum of
codimension $j$ in $\bPd$. By \cite{DeCMHodgeConj}, the most
important summand for our purposes is the one related to
$L_{00}=(R^{2n-1}\pi{_*}\mathbb{Q}_{\xc})_{\mid U}$ ($U:= \bPd
\backslash X^{\vee}$) \cite[2.5]{DeCMHodgeConj}. Specifically, if
one fix  an intermediate primitive Hodge class $\xi$, de Cataldo and
Migliorini define the \textit{singular locus} of the normal function
of $\xi$ as the support of the component of $\xi$ belonging to
$\hc^{-N+1} IC(R^{2n-1}\pi{_*}\mathbb{Q}_{\xc})$ in the
decomposition above (which is well defined in view of the perverse
filtration) \cite[Definition 3.3, Remark 3.4]{DeCMHodgeConj}. The
main result of \cite{DeCMHodgeConj} says such a singularity is able
to detect the local triviality of $\xi$.

In view of the aforementioned work of Thomas, it is particularly
interesting to take a closer look at the cohomology sheaf
$\hc^{-N+1} IC(R^{2n-1}\pi{_*}\mathbb{Q}_{\xc})$, especially in
correspondence of nodal hypersurfaces. This paper, which is a small
step in this direction, is devoted to the study of $
IC(R^{2n-1}\pi{_*}\mathbb{Q}_{\xc})_D$ with $D$ \textit{nodal
divisor whose nodes impose independent conditions on the linear
system giving the embedding $X\subseteq \bP$}. Under this
hypothesis, what we are going to do is \textit{to compute the
cohomology of the complex $IC(R^{2n-1}\pi{_*}\mathbb{Q}_{\xc})_D$
and, above all, to give it a geometric interpretation}.

The hypothesis that the nodes impose independent conditions to the
hypersurfaces of a very ample linear system has long been
investigated in relation to the study of Severi's varieties,
parametrizing irreducible nodal curves on a smooth algebraic surface
(compare e.g. with \cite{Wahl}, \cite{Sernesi}, \cite{CS} and
references therein). In particular, in the paper \cite{Sernesi},
Sernesi proves that such a condition implies that Severi's variety
is smooth of the expected dimension, by using deformation theory.

Our first result consists in a different proof of the same result,
extended to smooth algebraic varieties of even dimension (compare
with Theorem \ref{first} and Remark \ref{rmkfirst}). Our approach,
which is independent of deformation theory  and consists in a
careful local study of the conormal map, allows us to prove that
\textit{the dual variety, in a neighborhood of a nodal hypersurface
whose nodes impose independent conditions, is a divisor with normal
crossings} (cf. Theorem \ref{first}).

Understanding the local structure of the dual manifold allows us in
Section 3 to compute the cohomology of the complex
$IC(R^{2n-1}\pi{_*}\mathbb{Q}_{\xc})_D$, where $D$ denotes a  nodal
hypersurface whose nodes pose independent conditions, and above all
to give it a geometric interpretation. More precisely, we see that
the cohomology is concentrated in degrees $-N$ and $-N+1$,  that
$\hc^{-N} IC(R^{2n-1}\pi{_*}\mathbb{Q}_{\xc})_D$ is naturally
isomorphic to $H^{2n-1}(D)$, and that $\hc^{-N+1}
IC(R^{2n-1}\pi{_*}\mathbb{Q}_{\xc})_D$ is related with the
\textit{defect} of the nodes (Remark \ref{defect}). Furthermore, we
prove that \textit{the perverse filtration of $H^{2n}(D)$ is as
simple as possible because it consists of only two pieces that vary
in local systems over any component of Severi's variety} (Corollary
\ref{cormain}).

As a by-product, we see that \textit{under the hypothesis that a
hyperplane cuts $X$ in a set of nodes imposing independent
conditions, the pull-back of a primitive Hodge cycle coincides with
its local Green-Griffiths invariant}. In particular, we get  a
different proof of the fact that \textit{the local Green-Griffiths
invariant detects the local triviality of a Hodge cycle}, in our
context \cite[Proposition 3.8 (ii)]{DeCMHodgeConj}.

Last but not least, in  Section 4 we provide several examples of
even-dimensional smooth projective varieties equipped with linear
systems containing nodal hypersurfaces $D$, whose nodes impose
independent conditions and such that $\hc^{-N+1}
IC(R^{2n-1}\pi{_*}\mathbb{Q}_{\xc})_D$ is non-trivial.

\bigskip

\section{Notations and basic facts}

\medskip

\begin{notations}\label{notations}
From now on, unless  otherwise stated, all cohomology and
intersection cohomology groups are with $\mathbb Q$-coefficients.
\begin{enumerate}
\item For a complex  algebraic variety $X$, we
denote by $H^{l}(X)$ and $IH^{l}(X)$ its cohomology and intersection
cohomology groups. Let $D_c^b(X)$ be the constructible derived
category of sheaves of $\mathbb Q$-vector spaces on $X$. For a
complex of sheaves $\mathcal F^{}\in D_c^b(X)$, we denote by
$\hc^l(\fc)$ the $l$-th cohomology sheaf of $\fc$ and by $\mathbb
H^l(\mathcal F^{})$ the $l$-th hypercohomology group of $\fc$. Let
$IC^{}_{X}$ denotes the intersection cohomology complex of $X$. If
$X$ is nonsingular, we have $IC^{}_{X}\cong \mathbb
Q_X[\dim_{\mathbb C} X]$, where $\mathbb Q_X$ is the constant sheaf
$\mathbb Q$ on $X$.
\item   More generally, let $i:S\to X$ be a locally closed embedding of a
smooth irreducible subspace of $X$ and let $L$ be a local system on
$S$. Denote by $IC_{\overline{S}}(L):=i_{!*}L[\dim S]\in D_c^b(X)$
the \textit{intersection cohomology complex} of $L$ \cite[Sec.
5.2]{Dimca}, \cite[Sec. 2.7]{DeCMReview}. It  is defined as the
\textit{intermediary extension} of $L$, that is the unique extension
of $L$ in $D_c^b(X)$ with neither subobjects nor quotients supported
on $\overline{S} \backslash S$.
\end{enumerate}
\end{notations}
\medskip

In this paper, $X$ denotes a $2n$-dimensional smooth complex
projective variety embedded in
$\mathbb{P}^N_{\mathbb{C}}\equiv\mathbb{P}$ via a complete linear
system $\mid H \mid$. Denote by $X^{\vee}$ the \emph{dual variety}
of $X$ and consider the \textit{universal hyperplane family}
$\mathcal{X}\subset X\times \bPd$. We have natural projections:
$$ X \stackrel{q}{\longleftarrow} \mathcal{X} \stackrel{\pi}{\longrightarrow} \bPd, \quad \quad \dim \xc = 2n-1+N.
$$
The hyperplane sections  are the fibers of $\pi$:
$$\xc _H:= \pi^{-1}(H)=X\cap H, \quad \quad \dim \xc_H=2n-1.$$
Let $\con(X)\subset \mathcal{X}$ be the \emph{conormal variety of $X$}:
$$\con(X):=\{(p,H)\in X\times \bPd: \,\, TX_p\subseteq H \}, \quad \quad  \dim \con(X) = N-1,$$
where $TX_p$ denotes the embedded tangent space to $X$ at $p$. We
denote by $\pi_1: \con(X) \to \bPd$ the restriction of
$\pi:\mathcal{X} \stackrel{}{\longrightarrow} \bPd$ to $\con(X)$. We
have $X^{\vee}=\pi_1 (\con(X))$.  If $\dim X^{\vee}< N-1$, then, for
a general $H\in X^{\vee}$, the fiber $\pi_1(H)^{-1}$ has positive
dimension. This means that the general tangent hyperplane to $X$ is
tangent to a subvariety of positive dimension. Obviously, this
cannot happen if we replace $\mid H \mid$ with a sufficiently large
multiple. So we can always assume that $X^{\vee}$ \textit{is a
hypersurface} of $\bPd$.

If $H \in U:=\bPd \backslash X^{\vee}$, then $\xc_H$ is smooth hence
$\pi: \pi^{-1}(U) \rightarrow U$ is a smooth fibration. This implies
that the sheaf $R^{2n-1}\pi{_*}\mathbb{Q}_{\xc}$ restricts to a
local system on $U$.

As explained in \cite[Sec. 2]{DeCMHodgeConj}, the Decomposition Theorem for $\pi$ provides a non-canonical decomposition
\begin{equation}
\label{decThm}  R\pi{_*}\mathbb{Q}_{\xc}\cong \bigoplus_{i\in \mathbb{Z}}\bigoplus _{j\in \mathbb{N}}IC(L_{ij})[-i-(2n-1+N)], \quad \text{in} \,\,\, D_c^b(\bPd),
\end{equation}
where $L_{ij}$ denote a local system on a suitable stratum of
codimension $j$ in $\bPd$. By \cite{DeCMHodgeConj}, the most
important summand for the purpose of detect the primitive Hodge
classes of $X$, is the one related to
$L_{00}=(R^{2n-1}\pi{_*}\mathbb{Q}_{\xc})_{\mid U}$
\cite[2.5]{DeCMHodgeConj}.

The main aim of this paper is to investigate the cohomology sheaves
of the complex $IC((R^{2n-1}\pi_* \bQ_{\xc })_{ \mid U})[-N]\in
D_c^b(\bPd)$, near the points corresponding to \textit{nodal
divisors}.

One can find different approaches to the Decomposition Theorem
\cite{BBD}, \cite{DeCMHodge}, \cite{DeCMReview}, \cite{Saito},
\cite{Williamson}, which is a very general result but also rather
implicit. On the other hand,  there are many special cases for which
the Decomposition Theorem admits a simplified and explicit approach.
One of these is  the case of varieties with isolated singularities
\cite{N}, \cite{DGF3}, \cite{DGF2strata}, \cite{Forum}, \cite{DGF4}.
For instance, in the work  \cite{DGF3}, one can find a simplified
approach to the Decomposition Theorem for varieties with isolated
singularities, in connection with the existence of a \textit{natural
Gysin morphism}, as defined in \cite[Definition 2.3]{DGF1}. One of
the main ingredients of these arguments is a generalization of the
Leray-Hirsch   theorem in a categorical framework (\cite[Theorem
7.33]{Voisin}, \cite[Lemma 2.5]{DGF2}).
\bigskip

\section{Local study of the dual hypersurface}

\medskip

\noindent Let $X$ be a $2n$-dimensional smooth complex projective
variety embedded in $\mathbb{P}^N_{\mathbb{C}}\equiv\mathbb{P}$ via
a complete linear system $\mid H \mid$. Assume moreover that
$X^{\vee}$, the \emph{dual variety} of $X$, is a hypersurface in
$\mathbb{P}^{\vee}$ (compare with the previous section).

With notations as above, for any $H\in X^{\vee}$, the corresponding hyperplane cuts $X$ in a singular divisor
$\xc _H:= \pi^{-1}(H)=X\cap H$.

\medskip

\begin{definition}
Let $\vX _r\subseteq \vX$ be the \emph{locally closed} Zariski
subset of $\vX$ containing all the hyperplanes $H$ s.t. $\sing X_H$
consists in exactly $r$ singular ordinary double  points ($r$
\emph{nodes} for short). An irreducible component of  $\vX _r$ is
said to have the \emph{the expected dimension} if its dimension is
equal to $N-r$ (in what follows we assume $r \leq N$). If $H\in \vX
_r$, then we say that $\vX _r$ \emph{has the expected dimension at}
$H$, if it belongs to a component having the expected dimension and
not in components having bigger dimension.
\end{definition}

\medskip
The following result is probably well known via deformation theory, we include a slightly different proof of it in the attempt
of making the present paper reasonably self-contained.

\begin{theorem}
\label{first} Assume that $\Delta:= \sing(\xc _H)$ consists of
$\delta$ nodes. If $\Delta$ imposes independent conditions to $\mid
H \mid$, that is if $H^1(\ic _{\Delta, X}(1))=0$, then for every
small ball $B\subseteq \bPd$ containing $H$ we have:

\begin{enumerate}
\item $B\cap \vX$ is a divisor of $B$ with normal crossings;
\item for every $r\leq\delta$, $B\cap \vX _r$ is non empty smooth of pure dimension $N-r$.
\end{enumerate}
In particular, $\vX _{\delta}$ is smooth and has the expected dimension at $H$.
\end{theorem}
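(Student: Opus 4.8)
The plan is to reduce everything to a local analytic computation of the conormal map near the chosen hyperplane $H$. First I would set up coordinates: write $\Delta=\{p_1,\dots,p_\delta\}$ for the nodes of $\xc_H=X\cap H$, and choose local analytic coordinates on $X$ near each $p_k$ so that the defining equation of $\xc_H$ looks like a nondegenerate quadratic cone $z_1^2+\dots+z_{2n}^2=0$. The deformations of $H$ inside $\bPd$ correspond, via the complete linear system $\mid H\mid$, to sections of $\oc_X(1)$ modulo the one giving $H$; the hypothesis $H^1(\ic_{\Delta,X}(1))=0$ says exactly that the restriction map $H^0(\oc_X(1))\to H^0(\oc_\Delta(1))\cong\bC^\delta$ is surjective, hence that we may choose, among our deformation parameters, coordinates $t_1,\dots,t_\delta$ on $\bPd$ (near $H$) whose effect near $p_k$ is to add a constant $t_k$ to the local equation, independently at each node, with the remaining parameters fixing the higher-order behaviour. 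In other words, near $p_k$ the universal family looks like $z_1^2+\dots+z_{2n}^2 + t_k + (\text{higher order / other parameters}) = 0$, an $A_1$-versal-type picture in the single variable $t_k$.

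Next I would analyze the conormal variety $\con(X)\subseteq X\times\bPd$ and its projection $\pi_1$. Away from the nodes the fibre $\xc_H$ is smooth, so the only contributions to tangency over hyperplanes near $H$ come from points of $X$ near the $p_k$. Near each $p_k$ the condition $(p,H')\in\con(X)$ — that the hyperplane $H'$ be tangent to $X$ at $p$ — together with $p\in X_{H'}$, forces, in the local model above, $p$ to be the origin (the cone point) and $H'$ to satisfy $t_k=0$ modulo higher order. So locally near $(p_k,H)$ the conormal variety maps isomorphically (a local section) to the smooth hypersurface $\{t_k=0\}$ in $\bPd$, and globally $\vX$ near $H$ is the union $\bigcup_{k=1}^\delta \{t_k=0\}$ of $\delta$ smooth branches. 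Because the $t_k$ are part of a coordinate system on $B$, this union is a normal crossing divisor, which is exactly statement (1). For statement (2), $\vX_r$ near $H$ is precisely the locus where exactly $r$ of the equations $t_k=0$ hold and the other $\delta-r$ are nonzero (at a point of $\vX_r\cap B$ the singular locus of the corresponding hyperplane section consists of the $r$ surviving cone points, and one must check no new nodes appear elsewhere on $X$ — this follows from properness/compactness after shrinking $B$), hence it is a disjoint union of smooth strata each cut out by $r$ of the coordinates, of pure codimension $r$, i.e.\ dimension $N-r$; nonemptiness is clear since we may set any prescribed subset of the $t_k$ to zero. Taking $r=\delta$ gives the final assertion that $\vX_\delta$ is smooth of the expected dimension $N-\delta$ at $H$.

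The main obstacle I anticipate is making rigorous the step that identifies the deformation parameters $t_k$ with honest coordinates on $\bPd$ near $H$ and controlling the higher-order terms: one must verify that the independence hypothesis $H^1(\ic_{\Delta,X}(1))=0$ really does decouple the $\delta$ local pictures, i.e.\ that the differential of the map $B\to\bC^\delta$ recording the ``node-smoothing'' parameters is surjective, and that no tangency conditions are hidden in the remaining directions. This is essentially a transversality statement for the conormal map, and handling it carefully — rather than invoking deformation theory as a black box — is the crux; once the local normal form $z_1^2+\cdots+z_{2n}^2 + t_k$ is justified, the normal-crossing conclusion and the stratification by the $\vX_r$ are formal. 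A secondary, routine point is the compactness argument needed to shrink $B$ so that no extraneous singular points of nearby hyperplane sections wander in from the rest of $X$.
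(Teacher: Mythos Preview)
Your proposal is correct and follows the same overall strategy as the paper: both arguments identify the $\delta$ local branches of $\vX$ at $H$ with the $\delta$ nodes via the conormal projection, and both deduce normal crossings from the hypothesis $H^1(\ic_{\Delta,X}(1))=0$. The difference lies in how transversality of the branches is established. The paper uses biduality: it shows (via a limiting argument and the classical fact that the tangent hyperplane to $\vX$ at a smooth point $H'$ is the contact point, viewed in $\bPd$) that the embedded tangent space $T_{H,\mathcal U_i}$ of the $i$-th branch is literally the hyperplane $x_i^\vee\subset\bPd$; then $\bigcap_i T_{H,\mathcal U_i}=\langle x_1,\dots,x_\delta\rangle^\vee=\bP(H^0(\ic_{\Delta,X}(1)))$ has dimension $N-\delta$ exactly by the independence hypothesis. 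You instead use surjectivity of $H^0(\oc_X(1))\to\bC^\delta$ to build explicit coordinates $t_1,\dots,t_\delta$ on $B$ and a Morse-type local model, reading off $\mathcal U_k=\{t_k=0\}$ directly. These are dual formulations of the same linear algebra: your $t_k$ is the evaluation-at-$p_k$ functional, whose zero hyperplane in $\bPd$ is precisely $p_k^\vee$. The paper's biduality route avoids any choice of local coordinates or management of higher-order terms (the obstacle you flagged); your explicit route makes the stratification $\vX_r\cap B=\{\text{exactly $r$ of the $t_k$ vanish}\}$ immediate, whereas the paper handles part~(2) by a separate regular-sequence/complete-intersection argument on the branch equations.
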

\begin{proof}

By~\cite[Proposition 3.3]{N:K}, the projection
$\pi_1:\con(X)\to~\bPd$ is unramified at $(x_i,H)$, where the
$x_i$'s are the nodes of $X_H$ (observe that $(x_i, H)\in \con(X)$,
$\forall 1\leq i \leq \delta $). Hence, $\pi_1$ provides an
embedding of a suitable analytic neighborhood $U_i$ of $(x_i, H)$ in
$\bPd$: $$(x_i, H)\in U_i \subset \con(X).$$

Hence, the image $\mathcal{U} _i:=\pi_1 (U_i)$ provides a  branch of
$\vX$ passing through $H$, $\forall i$ such that $1\leq i \leq
\delta $.

By ~\cite[Lemme 4.1.2]{N:K}, each $\mathcal{U} _i$  intersects
$X^{\vee}\backslash {\text{Sing}}(X^{\vee})$ and we can find  a
sequence $(p_n^i,H_n^i)\in U_i$ such that
$$H_n^i\in X^{\vee}\backslash {\text{Sing}}(X^{\vee})\,\,\,\,\, \text{and} \,\,\,\,\, (p_n^i,H_n^i)\rightarrow (x_i,H).$$
By \cite[p. 209]{Harris}, the embedded tangent  space of
$\mathcal{U} _i$ at $H_n^i$  is  $p_n^i$ viewed as a hyperplane of
$\bPd$. Hence, the embedded tangent space of the branch $\mathcal{U}
_i$ at $H$ (in the following denoted by $T_{H,\, \mathcal{U} _i}$)
is $x_i$ (viewed as a hyperplane of $\bPd$).

On the other hand, the hypothesis $H^1(\ic _{\Delta, X}(1))=0$, applied to the short exact sequence
$$0 \rightarrow \ic _{\Delta, X}(1)\rightarrow \oc _X(1) \rightarrow \oc_{\Delta}(1) \rightarrow 0,$$
implies that $h^0(\ic _{\Delta, X}(1))=N +1 - \delta$ and the nodes span a linear subspace of dimension $\delta -1$ in $\bP$.
By our previous argument, we have
$$ \bigcap _{i=1}^{\delta} T_{H,\, \mathcal{U} _i}= < x_1, \, \dots  \, , x_{\delta} >^{\vee}=\mathbb{P}(H^0(\ic _{\Delta, X}(1)))
\quad \text{and} \quad \dim\bigcap _{i=1}^{\delta} T_{H,\,
\mathcal{U} _i}=N-\delta.$$ In other words, the branches of $\vX$ at
$H$ are independent and $\vX$ is a divisor with normal crossings
around $H$.

Consider now a subset $I\subseteq \{ 1, \dots , \delta \}$ and set
$$\vX _I:=\bigcap_{i\in I}\, \mathcal{U} _i.$$
As the branches $\mathcal{U} _i$ have independent tangent
hyperplanes at $H$, then for every sufficiently small ball
$B\subseteq \bPd$ containing $H$, $B \cap \vX _I$ is a smooth
complete intersection. Furthermore,  we have
\begin{equation}\label{2ways}
\dim B\cap \vX _I = N- \mid I\mid\,\,\,\,\,\,\, \text{and}
\,\,\,\,\,\,\, \dim B\cap \mathcal{U} _j \cap \vX _I= N-1- \mid
I\mid, \,\,\,\, \forall j\not \in I.
\end{equation}
We point out that one could also deduce (\ref{2ways}) from the
factoriality of  the ring of holomorphic functions defined in $B$
\cite[p. 10]{GriHa}. Indeed, by  factoriality, each branch
$\mathcal{U}_i$ is defined in $B$ by an analytic function $f_i$.
Since $\vX$ is a divisor with normal crossings around $H$, the
sequence $f_1, \dots , f_{\delta}$ \textit{is a regular sequence of
analytic functions} in $B$. Thus, any subset of $f_1, \dots ,
f_{\delta}$ is regular as well and (\ref{2ways}) follows at once.

Finally, the following locally closed subset of $\vX$
$$  B\cap \vX _I \backslash \left(\bigcup_{j\not \in I}B\cap \mathcal{U} _j \cap \vX _I \right)$$
is a non-empty analytic subspace of $\vX _{\mid I\mid}$, with the expected dimension.
 \end{proof}

\medskip

\begin{remark}\label{rmkfirst}
The final part of the proof of Theorem \ref{first} shows that, under
the hypothesis $H^1(\ic _{\Delta, X}(1))=0$, \emph{the nodes of $H$
can be independently smoothed}. This fact is usually proved by means
of deformation theory (compare with \cite{Wahl}, \cite{Sernesi},
\cite{CS}). We preferred to use a different approach here, because
it is more suited to our purposes.

\end{remark}

\bigskip

\section{Intersection cohomology complex on Severi's varieties}
\medskip

\noindent
\begin{notations}\label{not2}
\begin{enumerate}
\item Assume now that the hypotheses of Theorem \ref{first} are verified. In particular,
there exists a tubular neighborhood $T$ of some connected component
$\cc$ of $\vX _{\delta}$ such that $T \cap \vX$ is a \textit{divisor
with normal crossings} in $T$. Set $T^{0}:=T\backslash (T \cap
\vX)$. The local system $(R^{2n-1}\pi_* \bQ)_{\mid T^0}$ has a
canonical extension to a holomorphic vector bundle $\vc$ on $T$ (see
\cite{Deligne} and \cite{Schnell}).
\item  Fix $H\in U$. Combining Hard Lefschetz Theorem with  Lefschetz Hyperplane Theorem we have
$$H^{2n}(\xc_H)\cong H^{2n-2}(\xc_H)\cong H^{2n-2}(X).$$
Hence, $R^{2n}\pi_* \bQ_{\xc}$ is a \textit{constant  system} on
$U$. Put $h:= h^{2n-2}(X)$.
\end{enumerate}
\end{notations}

\begin{remark}
\label{isotopy} By Thom's first isotopy lemma \cite[Theorem
5.2]{GWPL}, the family $\xc \mid_{\cc}$ is locally trivial thus both
$(R^{2n-1}\pi_* \bQ_{\xc})_{\mid\cc}$ and $(R^{2n}\pi_*
\bQ_{\xc})_{\mid\cc}$ are local systems over $\cc$. Nevertheless, in
the following Theorem we give a direct proof of this result, that we
believe of independent interest.
\end{remark}

\medskip
\begin{theorem}\label{main} Assume the hypotheses of Theorem \ref{first} are verified for $H\in \vX_{\delta}$.
Fix  a connected component $\cc$ of $\vX_{\delta}$ containing $H$.
With notations as above, we have an isomorphism of local systems on $\cc$
\begin{equation}\label{th1}
\hc^0 (IC((R^{2n-1}\pi_* \bQ_{\xc })_{ \mid U})[-N]) _{\mid\cc}\cong
(R^{2n-1}\pi_* \bQ_{\xc})_{\mid\cc},
\end{equation}
and a short exact sequence of local systems on $\cc$
\begin{equation}\label{th2}
0\to \hc^1 (IC((R^{2n-1}\pi_* \bQ_{\xc })_{ \mid U})[-N])
_{\mid\cc}\to (R^{2n}\pi_* \bQ_{\xc})_{\mid\cc}\to \mathbb{Q}^{h}
\to 0.
\end{equation}
Furthermore, we have
\begin{equation}\label{th3}
\hc^i (IC((R^{2n-1}\pi_* \bQ_{\xc })_{ \mid U})[-N]) _{\mid\cc}=0,
\quad \forall i\geq 2.
\end{equation}
In particular, for any nodal divisor $H\in \vX_{\delta}$, we have
\begin{itemize}
\item $h^0 (IC((R^{2n-1}\pi_* \bQ_{\xc })_{ \mid U})[-N]) _H=h^{2n-1}(\xc _H),$
\item $h^1 (IC((R^{2n-1}\pi_* \bQ_{\xc })_{ \mid U})[-N]) _H=h^{2n}(\xc _H)-h,$
\item $h^i (IC((R^{2n-1}\pi_* \bQ_{\xc })_{ \mid U})[-N]) _H=0,$ $\forall i\geq 2$.
\end{itemize}

\end{theorem}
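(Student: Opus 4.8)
The key point is that the dual variety $\vX$ is, in a neighborhood $T$ of $\cc$, a divisor with normal crossings, so the computation of $IC$ becomes a local, combinatorial matter governed by the $\delta$ branches $\mathcal{U}_1,\dots,\mathcal{U}_\delta$ meeting transversally along $\cc$. I would first reduce the statement to a local model: choose analytic coordinates $(t_1,\dots,t_N)$ on a polydisc $B\subset T$ centered at $H$ so that $\mathcal{U}_i = \{t_i = 0\}$ for $1\le i\le\delta$, and $\cc\cap B = \{t_1=\cdots=t_\delta=0\}$, with $T^0 = B\setminus\bigcup_{i=1}^\delta\{t_i=0\}$. On $T^0$ the sheaf $R^{2n-1}\pi_*\bQ_{\xc}$ is the local system $L_{00}$, and its intersection cohomology complex restricted to a point of $\cc$ is computed by the standard formula for $IC$ of a local system on the complement of a normal crossing divisor: the stalk cohomology at the origin is the cohomology of the fiber $(S^1)^\delta$-type picture, namely $\bigwedge^\bullet$ of the space of invariants/coinvariants of the local monodromies. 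Concretely, $\hc^k(IC(L_{00})[-N])_H$ is built from $H^k$ of the Koszul-type complex associated with the $\delta$ commuting unipotent monodromy operators $T_i$ acting on the stalk $V := (R^{2n-1}\pi_*\bQ_{\xc})_{H_0}$ at a nearby smooth point.

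**Identifying the monodromies.** The crucial input is the nature of each $T_i$: since $\mathcal{U}_i$ is the branch of $\vX$ produced by a single node $x_i$ being smoothed (Theorem~\ref{first} and Remark~\ref{rmkfirst}), the local monodromy around $\mathcal{U}_i$ is the Picard–Lefschetz transformation attached to the vanishing cycle $\delta_i\in V$ of that node. Thus $T_i(v) = v \pm \langle v,\delta_i\rangle\,\delta_i$ on the middle cohomology $H^{2n-1}(\xc_H)$ of the $(2n-1)$-dimensional fiber, so $N_i := T_i - \mathrm{id}$ has rank one with image $\bQ\,\delta_i$ and kernel $\delta_i^\perp$. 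Moreover the hypothesis that the $\delta$ nodes impose independent conditions should force the vanishing cycles $\delta_1,\dots,\delta_\delta$ to be linearly independent in $V$ (this is the arithmetic translation of the independence; I expect it follows by the same argument as in Theorem~\ref{first}, degenerating the $\delta$ branches one at a time, or from the fact that the $\delta$-nodal fiber has exactly $\delta$ vanishing cycles that remain independent). Granting independence of the $\delta_i$, the operators $N_i$ satisfy $N_iN_j = 0$ for $i\ne j$ as soon as $\langle\delta_i,\delta_j\rangle$ is handled — and in fact one only needs that the images $\bQ\delta_i$ are independent and each $N_i$ is rank one, which makes the associated Koszul complex degenerate drastically.

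**The cohomology computation.** With these operators in hand, $\hc^0(IC(L_{00})[-N])_H = \bigcap_{i=1}^\delta \ker N_i = \bigcap_i \delta_i^\perp$, which by the local invariant cycle theorem is exactly the image of $H^{2n-1}(\xc_H)\to V$, i.e. $H^{2n-1}(\xc_H)$ itself (the map is injective here because $\xc_H$ has only isolated singularities, so $H^{2n-1}(\xc_H)$ carries no classes supported at the nodes); this gives \eqref{th1}. For $\hc^1$, the normal-crossing $IC$ formula yields $\hc^1(IC(L_{00})[-N])_H = \ker\!\big(\bigoplus_i V/\!\ker N_i \to \text{(next term)}\big)$, which simplifies — using $\mathrm{rk}\,N_i=1$ and independence of the $\delta_i$ — to $\bigoplus_{i=1}^\delta \mathrm{Im}\,N_i \cong \bQ^\delta$ modulo the image of the differential, and the output is precisely the cokernel of $H^{2n-1}(\xc_H)\hookrightarrow V$, which one identifies with the kernel of $H^{2n}(\xc_H)\to H^{2n}(X^{\text{sm}})\cong\bQ^h$ via the Clemens–Schmid / vanishing cohomology sequence for a nodal degeneration; this is \eqref{th2}. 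The vanishing \eqref{th3} in degrees $\ge 2$ is then immediate: the Koszul complex computing $\hc^{\ge 2}$ has all its terms built from products $N_{i_1}\cdots N_{i_k}$ with distinct indices, which vanish once $k\ge 2$ because $\mathrm{Im}\,N_j\subseteq\ker N_i$ fails only on a space of the wrong dimension — more precisely independence of the $\delta_i$ forces $N_iN_j=0$ for $i\ne j$, killing the higher complex. The "in particular" statement for stalk dimensions at an arbitrary $H\in\vX_\delta$ follows by taking stalks of the three local-system statements.

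**Main obstacle.** The step I expect to require the most care is the precise identification of the local monodromies $T_i$ and, above all, the claim that the vanishing cycles $\delta_1,\dots,\delta_\delta$ are linearly independent in $V$ — equivalently that the "defect" is the only obstruction and that $H^{2n}(\xc_H)$ fits into the stated exact sequence. One must rule out relations among vanishing cycles coming from the global geometry of $X$, and the hypothesis $H^1(\ic_{\Delta,X}(1))=0$ must be used exactly here; I would prove it by combining Theorem~\ref{first} (which already encodes the independence of the branches, hence of their conormal directions $x_i$) with a Picard–Lefschetz analysis along a generic line through $H$ meeting the $\delta$ branches, so that the number of vanishing cycles is forced to equal $\delta$ and a dimension count (matching $h^{2n}(\xc_H) - h$ with $\delta - \dim\bigcap\delta_i^\perp$-type quantities) closes the argument. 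Once the monodromy picture is pinned down, everything else is the standard normal-crossing $IC$ stalk formula plus the nodal Clemens–Schmid sequence.
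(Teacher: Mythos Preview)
Your overall architecture is the same as the paper's: use Theorem~\ref{first} to get a normal-crossing model for $\vX$ near $\cc$, then compute the stalk of $IC$ via the Koszul-type complex $B^p=\bigoplus_{i_1<\cdots<i_p}N_{i_1}\cdots N_{i_p}\vc_H$ of Cattani--Kaplan--Schmid, identify the $N_i$ through Picard--Lefschetz, and finally match the resulting short complex with the relative cohomology sequence of the pair $(\xc_{H_t},\cup_i(\xc_{H_t}\cap B_i))$ to read off \eqref{th1} and \eqref{th2}. So the plan is correct.

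However, there is a genuine confusion at the heart of your ``main obstacle''. You write that the hypothesis $H^1(\ic_{\Delta,X}(1))=0$ should force the vanishing cycles $\delta_1,\dots,\delta_\delta$ to be \emph{linearly independent} in $V$, and that this is what makes $N_iN_j=0$ and hence kills $B^{\geq 2}$. Both claims are wrong. First, $N_iN_j(v)=\pm\langle v,\delta_j\rangle\langle\delta_j,\delta_i\rangle\,\delta_i$, so $N_iN_j=0$ is equivalent to $\langle\delta_i,\delta_j\rangle=0$, and this holds simply because the nodes $x_i$ are distinct points: the vanishing spheres live in disjoint Milnor balls, so their intersection number vanishes. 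This is automatic for \emph{any} nodal fiber and uses nothing about independent conditions; the paper states exactly this (``as $\xc_H$ has $\delta$ ordinary double points, the vanishing spheres are disjoint to each other and we have $N_iN_j=0$''). Linear independence of the $\delta_i$ is neither sufficient (it says nothing about $\langle\delta_i,\delta_j\rangle$) nor necessary for this step.

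Second, and more seriously, the vanishing cycles are in general \emph{not} linearly independent, and the theorem is designed precisely to measure their failure to be so. Since the intersection form on $V=H^{2n-1}(\xc_{H_t})$ is nondegenerate, linear independence of the $\delta_i$ would force the map $V\to\bigoplus_iN_i\vc_H\cong\bQ^\delta$, $v\mapsto(\langle v,\delta_i\rangle)_i$, to be surjective, hence $\hc^1=0$. But the whole content of \eqref{th2} is that $\hc^1$ equals the defect $h^{2n}(\xc_H)-h$, which is nonzero in all the examples of Section~4 (where the hypothesis $H^1(\ic_{\Delta,X}(1))=0$ is nonetheless satisfied). So your proposed ``main obstacle'' is a statement that is false in the cases of interest. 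The hypothesis $H^1(\ic_{\Delta,X}(1))=0$ is used \emph{only} through Theorem~\ref{first}, to guarantee the normal-crossing picture; once that is in place, no further independence of the $\delta_i$ is needed or true, and the identification of $\hc^0$ and $\hc^1$ comes from comparing the two-term complex $\vc_H\to\bigoplus_iN_i\vc_H$ with the excision sequence for $(\xc_{H_t},\cup_i(\xc_{H_t}\cap B_i))$, exactly as the paper does.
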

\begin{proof}
Assume  that the hypotheses of Theorem \ref{first} are verified. In
particular, there exists a tubular neighborhood $T$ of $\cc$ such
that $T \cap \vX$ is a divisor with normal crossings in $T$. Set
$T^{0}:=T\backslash (T \cap \vX)$. The local system $(R^{2n-1}\pi_*
\bQ)_{\mid T^0}$ has a canonical extension to a local system $\vc$
on $T$ (see \cite{Deligne} and \cite{Schnell}).

Further, in a suitable neighborhood of $H\in \cc$, the equation of
$\vX$ has the form $t_1 \dots t_{\delta}=0$ and the local system
$(R^{2n-1}\pi_* \bQ_{\xc })_{ \mid T^0}$ has monodromy operators
$T_1, \dots , T_{\delta}$, with $T_i$ given by moving around the
hyperplane $t_i=0$. If we denote by $N_i$ the \textit{logarithm of
the monodromy operator} $T_i$, by \cite{CKS} and \cite[p. 322]{KP}
the cohomology  $$\hc^i (IC((R^{2n-1}\pi_* \bQ_{\xc })_{ \mid
U})[-N]) _{H}$$ of the intersection cohomology complex at $H\in \cc$
can be computed as the $i$-th cohomology of the complex of
finite-dimensional vector spaces
$$B^p:=\bigoplus _{i_1<i_2< \dots < i_p}N_{i_1}N_{i_2}\dots N_{i_p}\vc_H, $$
with differential acting on the summands by the rule
$$N_{i_1}\dots \hat{N}_{i_l}\dots N_{i_{p+1}}\vc_H \stackrel{(-1)^{l-1} N_{i_l}}{\longrightarrow} N_{i_1}\dots N_{i_l}\dots N_{i_{p+1}}\vc_H.$$
Fix $H\in \vX_{\delta}$. Since $\xc _H$ is nodal, the logarithm of
the monodromy operators $N_i$ act according to the
\textit{Picard-Lefschetz formula}. Furthermore, as $\xc _H$ has
$\delta$ ordinary double points, the vanishing spheres are disjoint
to each other and we have
$$N_i N_j=0, \quad \text{for any} \quad i\not= j.$$
So the complex above is concentrated in degrees $0$ and $1$. We have
$$\hc^i (IC((R^{2n-1}\pi_* \bQ_{\xc })_{ \mid U})[-N]) _{H}=0,   \quad \forall H\in \vX_{\delta}, \,\,\, \forall i\geq 2.$$
Thus  (\ref{th3}) is  proved.

Furthermore, we have the following exact sequence
\begin{equation}\label{computeIC}
0\to \hc^0 (IC((R^{2n-1}\pi_* \bQ_{\xc })_{ \mid U})[-N]) _{H}\to
\vc_H \to  \bigoplus_i N_i\vc _H \to
\quad\quad\quad\quad\quad\quad\quad\quad\quad\quad
\end{equation}
$$
\quad\quad\quad\quad\quad\quad\quad\quad\quad\quad\quad\quad\quad\to
\hc^1 (IC((R^{2n-1}\pi_* \bQ_{\xc })_{ \mid U})[-N]) _{H} \to 0.
$$
On the other hand, consider a hyperplane $H_t\in U$ very near to
$H$, such that $\xc_{H_t}$ is smooth, and denote by $B_i$ a small
ball around the $i$-th node of $\xc_{H}$.  By excision, we have
$$H^l(\xc_{H}, \cup _i (\xc_{H}\cap B_i))\cong  H^l(\xc_{H_t}, \cup _i (\xc_{H_t}\cap B_i)).$$
From the exact sequence
$$ \dots \rightarrow H^{l-1}( \cup _i (\xc_{H}\cap B_i)) \rightarrow H^l(\xc_{H}, \cup _i (\xc_{H}\cap B_i))\rightarrow
H^l(\xc_{H})\rightarrow H^{l}( \cup _i (\xc_{H}\cap B_i))\rightarrow \dots ,$$
and recalling the conic nature of isolated singularities of a divisor \cite{Milnor}, we get
$$H^l(\xc_{H})\cong H^l(\xc_{H}, \cup _i (\xc_{H}\cap B_i))\cong H^l(\xc_{H_t}, \cup _i (\xc_{H_t}\cap B_i)), \,\, \text{if} \,\, l\geq 2.$$
Inserting in the relative cohomology sequence for the pair
$(\xc_{H_t}, \cup _i (\xc_{H_t}\cap B_i))$, we find the exact
sequence
\begin{equation}
\label{computeIC2}  0 \to   H^{2n-1}(\xc_{H})\to
H^{2n-1}(\xc_{H_t})\to  H^{2n-1}( \cup _i (\xc_{H_t}\cap
B_i))\cong\mathbb{Q}^{\delta}\to\quad\quad\quad\quad\quad\quad\quad\quad\quad
\end{equation}
$$\quad\quad\quad\quad\quad\quad\quad\quad\quad\quad\quad\quad\quad\quad\quad\quad\quad\quad\to  H^{2n}(\xc_{H})
\to   H^{2n-2}(X)\to 0,$$ where we have also taken into account  the
isomorphism (compare with Notations \ref{not2})
$$ H^{2n-2}(X)\cong  H^{2n}(\xc _{H_t}).$$
By the Picard-Lefschetz formula, the map
$H^{2n-1}(\xc_{H_t})\to\mathbb{Q}^{\delta}$ coincides, up to some
irrelevant sign, with the map $\vc_H \to  \bigoplus_i N_i\vc _H $ of
the sequence (\ref{computeIC}). Hence, comparing (\ref{computeIC})
and (\ref{computeIC2}), we find
$$\hc^0 (IC((R^{2n-1}\pi_* \bQ_{\xc })_{ \mid U})[-N]) _{H}\cong H^{2n-1}(\xc_{H})\cong (R^{2n-1}\pi_* \bQ_{\xc})_{H} $$
and
$$0\to \hc^1 (IC((R^{2n-1}\pi_* \bQ_{\xc })_{ \mid U})[-N]) _{H}\to (R^{2n}\pi_* \bQ_{\xc})_{H}\to \mathbb{Q}^{h}
\to 0.
$$
Thus, (\ref{th1}) and (\ref{th2}) are proved for any $H\in \cc$ and
we need only to prove a similar result at the level of local systems
on $\cc$.
% $$
%0\to (R^{2n-1}\pi_* \bQ)_{\mid\vX_{\delta}}\to \vc_{\mid \vX_{\delta}} \to \bigoplus_i (\vc_i) _{\mid\vX_{\delta}}\to (R^{2n}\pi_* \bQ)_{\mid\vX_{\delta}}\to \bQ^{h}
%\to 0.$$
First of all, the fact that $T\cap \vX$ is a divisor with normal
crossings implies that, if we fix a small neighborhood $B$ of $H$ in
$\bPd$, then the local fundamental group $\pi_1(B\backslash(B\cap
\vX), H)\cong \mathbb{Z}^{\delta}$ is independent of $H\in
\vX_{\delta}$. So, the rank of $\vc_{\mid \vX_{\delta}} \to
\bigoplus_i N_i\vc _{\mid\vX_{\delta}}$ does not change as long as
we let $H$ to vary in $\cc$, and its kernel is a vector bundle on
$\cc$. Furthermore, from the description of the canonical extension
given e.g. in \cite[sec. 2]{Schnell}, one infers that such a kernel
is the $\mathbb{Z}^{\delta}$-invariant part  of the local system
$(R^{2n-1}\pi_* \bQ_{\xc })_{ \mid T^0}\otimes \mathbb{C}$.
Moreover, since the tubular neighborhood $T$ is homeomorphic to a
fiber bundle on $\vX_{\delta}$, the long exact sequence of homotopy
groups of $T^0$
$$\dots \rightarrow \mathbb{Z}^{\delta}\rightarrow \pi_1(T^0, H) \rightarrow \pi_1(\cc, H)\rightarrow 0$$
shows that the kernel of $\vc_{\mid \cc} \to \bigoplus_i N_i\vc
_{\mid\cc}$ descends to a local system on $\cc$, consisting in the
$\mathbb{Z}^{\delta}$-invariant part  of the local system
$(R^{2n-1}\pi_* \bQ_{\xc })_{ \mid T^0}\otimes \mathbb{C}$, i.e.
with $(R^{2n-1}\pi_* \bQ_{\xc})_{\mid\vX_{\delta}}\otimes
\mathbb{C}$ by taking also into account of (\ref{computeIC2}). This
concludes the proof of (\ref{th1}).

As for the proof of (\ref{th2}), we argue in a similar way. First of
all we observe that the vector space $\bigoplus_i N_i\vc _{H}$  is
contained in the $\mathbb{Z}^{\delta}$-invariant part  of the local
system $(R^{2n-1}\pi_* \bQ_{\xc })_{ \mid T^0}\otimes \mathbb{C}$ as
well. This follows just combining \cite[p. 42]{Looijenga} (recall
that the dimension of $\xc_{H_t}$ is odd), with the fact that the
vanishing spheres are disjoint to each other. By the same argument
as above, $\bigoplus_i N_i\vc _{H}$ is the stalk at $H$ of a local
system $\cc$, on which $\pi_1(\cc, H)$ acts by  ``exchanging the
branches". By (\ref{computeIC}), $\hc^1 (IC((R^{2n-1}\pi_* \bQ_{\xc
})_{ \mid U})[-N]) _{H}$ is a local system $\cc$ as well and
(\ref{th2}) follows by comparison with (\ref{computeIC2}).
\end{proof}

\medskip
\begin{remark}
Theorem \ref{main} implies that both $(R^{2n-1}\pi_*
\bQ_{\xc})_{\mid\cc}$ and $(R^{2n}\pi_* \bQ_{\xc})_{\mid\cc}$ are
local systems on any component $\cc$ of $\vX_{\delta}$ having the
expected dimension. We observed that this also follows from Thom's
first isotopy lemma.
\end{remark}

\medskip
Fix again a nodal hypersurface $\xc_H$ such that $H\in \cc$. As
explained in \cite[2.11]{DeCMHodgeConj}, the decomposition
(\ref{decThm}) produces a \textit{perverse filtration} on the groups
$H^l(\xc _H)=H^l(X\cap H)$. By \cite[Lemma 3.1]{DeCMHodgeConj},
$H^{2n}(\xc _H)=H^{2n}_{\leq 1}(\xc _H)$ and the graded piece
$H^{2n}_1(\xc _H)$ coincides with the  stalk at $H$ of the constant
system $(R^{2n}\pi_* \bQ_{\xc})_U$. Furthermore,  in the proof of
\cite[Lemma 3.1]{DeCMHodgeConj} it is shown that $\hc^1
(IC((R^{2n-1}\pi_* \bQ_{\xc })_{ \mid U})[-N]) _H$ is one of the
summands of the graded piece $H^{2n}_0(\xc _H)$. By (\ref{th2}) of
Theorem \ref{main}, we conclude that these are the only non trivial
summands of  the perverse filtration of $H^{2n}(\xc _H)=H^{2n}(X\cap
H)$. In a nutshell, we have proved that \textit{the perverse
filtration  is as simple as possible because it consists of only two
pieces that vary in local systems over any component of Severi's
variety}.

\begin{corollary}\label{cormain}
With notations as above, let $H\in \cc$. In the perverse filtration of $H^{2n}(\xc _H)$ we have $H^{2n}_{\leq -1}(\xc _H)=0$ and
$H^{2n}_{0}(\xc _H)\cong\hc^1 (IC((R^{2n-1}\pi_* \bQ_{\xc })_{ \mid U})[-N]) _H$.
In other words, the sequence
$$ 0\to \hc^1 (IC((R^{2n-1}\pi_* \bQ_{\xc })_{ \mid U})[-N]) _{\mid\cc}\to (R^{2n}\pi_* \bQ_{\xc})_{\mid\cc}\to \mathbb{Q}^{h}
\to 0,$$
represents the perverse filtration of the local system $(R^{2n}\pi_* \bQ_{\xc})_{\mid\cc}$.
\end{corollary}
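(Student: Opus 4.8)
The plan is to deduce Corollary \ref{cormain} almost entirely from results already in place, treating it as a repackaging of Theorem \ref{main} together with the two facts quoted from \cite[Lemma 3.1]{DeCMHodgeConj}. First I would recall precisely what the perverse filtration on $H^{2n}(\xc_H)$ is: applying the decomposition (\ref{decThm}) and taking hypercohomology, $H^l(\xc_H)=\mathbb{H}^{l-(2n-1+N)}(R\pi_*\bQ_\xc)_H$ is filtered by the perverse degree, and the graded pieces $H^l_k(\xc_H)$ are the contributions of the summands $IC(L_{ij})[-i-(2n-1+N)]$ with $i=k$. The two inputs from \cite{DeCMHodgeConj} are then: (a) $H^{2n}(\xc_H)=H^{2n}_{\leq 1}(\xc_H)$ with $H^{2n}_1(\xc_H)$ equal to the stalk at $H$ of the constant system $R^{2n}\pi_*\bQ_{\xc,U}=\bQ^h$; and (b) $\hc^1(IC((R^{2n-1}\pi_*\bQ_\xc)_{\mid U})[-N])_H$ is one of the direct summands making up $H^{2n}_0(\xc_H)$.

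Next I would argue that in fact (b) is an equality, i.e. that $\hc^1(IC(\cdots)[-N])_H$ is \emph{all} of $H^{2n}_0(\xc_H)$ and that $H^{2n}_{\leq -1}(\xc_H)=0$. The point is a dimension/rank count made possible by Theorem \ref{main}: from (a) we get a short exact sequence $0\to H^{2n}_{\leq 0}(\xc_H)\to H^{2n}(\xc_H)\to \bQ^h\to 0$, so $h^{2n}(\xc_H)=\dim H^{2n}_{\leq 0}(\xc_H)+h$. On the other hand, the bulleted consequences of Theorem \ref{main} give $h^1(IC((R^{2n-1}\pi_*\bQ_\xc)_{\mid U})[-N])_H=h^{2n}(\xc_H)-h$. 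Combining, $\dim H^{2n}_{\leq 0}(\xc_H)=h^1(IC(\cdots)[-N])_H$. Since by (b) $\hc^1(IC(\cdots)[-N])_H$ is a summand of $H^{2n}_0(\xc_H)$, which is itself a summand of $H^{2n}_{\leq 0}(\xc_H)$, equality of dimensions forces $H^{2n}_{\leq -1}(\xc_H)=0$ and $H^{2n}_0(\xc_H)\cong\hc^1(IC(\cdots)[-N])_H$. This proves the first two displayed assertions of the corollary pointwise.

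Finally I would upgrade the pointwise statements to the stated exact sequence of local systems over $\cc$. This is where Theorem \ref{main} does the real work: (\ref{th2}) already provides the short exact sequence $0\to\hc^1(IC(\cdots)[-N])_{\mid\cc}\to(R^{2n}\pi_*\bQ_\xc)_{\mid\cc}\to\bQ^h\to 0$ as sheaves on $\cc$, with all three terms local systems (the first by Theorem \ref{main}, the third constant of rank $h$ by Notations \ref{not2}(2), and hence the middle one a local system as an extension — or directly by Remark \ref{isotopy}). Identifying the stalk of this sequence at $H$ with the perverse filtration of $H^{2n}(\xc_H)$ via the isomorphisms just established, and noting that these identifications are compatible with the monodromy action of $\pi_1(\cc,H)$ (both sides being functorially attached to the family $\xc\mid_\cc$), gives the claim. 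The only genuinely delicate point, and the one I would write out carefully, is this last compatibility: one must check that the isomorphism $H^{2n}_0(\xc_H)\cong\hc^1(IC(\cdots)[-N])_H$ coming from the dimension count is the one induced by (\ref{computeIC})–(\ref{computeIC2}) in the proof of Theorem \ref{main}, so that it is $\pi_1(\cc,H)$-equivariant rather than merely an abstract isomorphism of vector spaces; once this is granted, everything else is bookkeeping with the perverse filtration.
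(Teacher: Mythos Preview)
Your proposal is correct and follows essentially the same route as the paper: the argument in the paper is exactly the dimension count you describe, combining \cite[Lemma 3.1]{DeCMHodgeConj} (your inputs (a) and (b)) with the exact sequence (\ref{th2}) of Theorem \ref{main} to conclude that the summand $\hc^1(IC(\cdots)[-N])_H\subseteq H^{2n}_0(\xc_H)$ exhausts $H^{2n}_{\leq 0}(\xc_H)$. One remark: the monodromy-compatibility issue you flag at the end is not a genuine obstacle, since the inclusion in (b) already identifies $\hc^1(IC(\cdots)[-N])_H$ with a canonical \emph{subspace} of $H^{2n}(\xc_H)$ (not merely an abstractly isomorphic vector space), and the dimension count shows this subspace equals $H^{2n}_{\leq 0}(\xc_H)$; the paper accordingly does not dwell on this point.
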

By \cite[Lemma 3.1 (ii)]{DeCMHodgeConj}, any primitive Hodge class
belongs to $H^{2n}_{\leq 0}(\xc _H)$.  In view of previous
Corollary, we have $H^{2n}_{\leq 0}(\xc _H)=H^{2n}_{0}(\xc
_H)\cong\hc^1 (IC((R^{2n-1}\pi_* \bQ_{\xc })_{ \mid U})[-N]) _H$ .
This implies that, \textit{under the hypothesis that our hyperplane
cuts $X$ in a set of nodes imposing independent conditions, the
pull-back of a primitive Hodge cycle coincide with its local
Green-Griffiths invariant}. In particular, we get  a different proof
of the fact that \textit{the local Green-Griffiths invariant detects
the local triviality of an Hodge cycle} \cite[Proposition 3.8
(ii)]{DeCMHodgeConj}.
\medskip

\begin{remark}\label{defect}
Assume now that $X$ is a projective space. Then $H^{2n-2}(X)$ is
generated by the hyperplane class and $H^{2n-2}(X\cap H)\not \cong
H^{2n-2}(X) $ iff the hypersurface $X\cap H$ has ``defect''
\cite[\textsection 6.4]{Dimca}. Specifically, $h^1
(IC((R^{2n-1}\pi_* \bQ_{\xc })_{ \mid U})[-N]) _H$ coincides with
the defect of $X\cap H$, which is constant on some connected
component of $\xc _{\delta}$. We note in passing that a  great deal
of work has been devoted to the study of the defect of  projective
nodal complete intersections, in relation to the number and the
position of the nodes. For instance, in the paper \cite{CCM} one can
find a construction of factorial complete intersections of
codimension 2, having (asymptotically) a maximal number of nodes
(see also \cite{JAG} and \cite{EFGII} for other results concerning
smooth projective varieties of codimension 2).
\end{remark}

\bigskip

\section{Examples}

\medskip
In this  section  we provide several examples of even-dimensional
smooth projective varieties equipped with linear systems containing
nodal hypersurfaces $D$, whose nodes impose independent conditions.

\subsection{Curves in a projective surface}
Let $X$ denotes a smooth projective surface embedded in some
projective space via a very ample divisor $H$. Let $C\subset X$ be a
smooth curve in $X$ and let $K_X$ denotes the canonical divisor of
$X$. If $n\gg 0$, we can assume
\begin{equation}
\label{hypC} H^1(\ic _{C,X}(n))=0, \,\,\, \text{and} \,\, \mid H-C \mid \,\, \text{very ample on X}.
\end{equation}
Under these conditions, the general curve $R\in \mid H-C \mid$ is
smooth, $\Delta:= R\cap C$ is reduced and $R\cup C\sim nH$ has only
ordinary double points.

\medskip
\begin{proposition}
\label{thmsurface} With notations as above, assume conditions
(\ref{hypC}) verified. Assume additionally that $C\cdot K_X<0$. Then
the set of nodes $\Delta=R\cap C$ imposes independent conditions to
the linear system  $\mid nH \mid$, that is to say $H^1(\ic_{\Delta,
X}(n))=0$.
\end{proposition}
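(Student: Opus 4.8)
The plan is to split the desired vanishing $H^1(\ic_{\Delta,X}(n))=0$ into one statement on the ambient surface $X$ and one on the curve $C$, the latter being the only place where the hypothesis $C\cdot K_X<0$ enters. Since $\Delta\subseteq C\subseteq X$, I would start from the short exact sequence of sheaves on $X$
\[
0\longrightarrow \ic_{C,X}(n)\longrightarrow \ic_{\Delta,X}(n)\longrightarrow \ic_{\Delta,C}(n)\longrightarrow 0,
\]
in which $\ic_{\Delta,C}=\ic_{\Delta,X}/\ic_{C,X}$ is the ideal sheaf of $\Delta$ inside $C$, regarded on $X$ through $C\hookrightarrow X$ (twisting by the locally free $\oc_X(n)$ preserves exactness). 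Passing to cohomology and invoking the first part of (\ref{hypC}), namely $H^1(\ic_{C,X}(n))=0$, reduces the claim to the single vanishing $H^1(C,\ic_{\Delta,C}(n))=0$.

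The next step is to recognise $\ic_{\Delta,C}(n)$ as an explicit line bundle on the smooth curve $C$. Since $\Delta=R\cap C$ is a reduced zero-dimensional subscheme of the smooth curve $C$, it is an effective Cartier divisor on $C$, and as such it is the restriction to $C$ of the Cartier divisor $R$ on $X$, so $\oc_C(\Delta)\cong \oc_X(R)\mid_C$ and $\ic_{\Delta,C}\cong \oc_X(-R)\mid_C$. Recalling that $R\cup C\sim nH$, i.e. $R\sim nH-C$, this gives
\[
\ic_{\Delta,C}(n)\cong \oc_X(nH-R)\mid_C\cong \oc_X(C)\mid_C\cong N_{C/X},
\]
the normal bundle of $C$ in $X$; hence everything comes down to $H^1(C,N_{C/X})=0$.

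For this last vanishing I would use Serre duality on the smooth projective curve $C$, $H^1(C,N_{C/X})\cong H^0(C,\omega_C\otimes N_{C/X}^{\vee})^{\vee}$, together with the adjunction formula $\omega_C\cong (K_X+C)\mid_C\cong K_X\mid_C\otimes N_{C/X}$; combining the two yields $\omega_C\otimes N_{C/X}^{\vee}\cong K_X\mid_C$ and therefore $H^1(C,N_{C/X})\cong H^0(C,K_X\mid_C)^{\vee}$. Since $\deg_C(K_X\mid_C)=K_X\cdot C<0$ by hypothesis, the line bundle $K_X\mid_C$ has negative degree on the (irreducible) curve $C$ and hence no nonzero global section, so $H^1(C,N_{C/X})=0$. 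Unwinding the two reductions then gives $H^1(\ic_{\Delta,X}(n))=0$, i.e. $\Delta$ imposes independent conditions on $\mid nH\mid$.

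I do not expect a genuine obstacle: the argument is a short two-step d\'evissage, and the only points requiring a little care are that the expression "$R\in\mid H-C\mid$" in (\ref{hypC}) should be read compatibly with $R\cup C\sim nH$ (that is, $R\sim nH-C$), and that, should $C$ be permitted to be disconnected, the negative-degree vanishing must be applied on each connected component, which needs $K_X\cdot C_i<0$ for every component $C_i$ and is automatic when $C$ is irreducible, the case relevant to Severi's varieties. The actual content of the statement is just that the assumption $C\cdot K_X<0$ forces the curve-level obstruction $H^1(C,N_{C/X})\cong H^0(C,K_X\mid_C)^{\vee}$ to vanish; note that this part is independent of $n$, the hypothesis $n\gg 0$ serving only to guarantee the ambient vanishing $H^1(\ic_{C,X}(n))=0$ and the very ampleness needed to produce the nodal divisor $R\cup C$.
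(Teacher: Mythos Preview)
Your proof is correct and is essentially the same as the paper's: both start from the short exact sequence $0\to\ic_{C,X}(n)\to\ic_{\Delta,X}(n)\to\ic_{\Delta,C}(n)\to 0$, reduce via $H^1(\ic_{C,X}(n))=0$ to the vanishing of $H^1(\ic_{\Delta,C}(n))$, identify this line bundle on $C$ (the paper writes it as $\omega_C(-K_X)$ via adjunction, you write it as $N_{C/X}$, and these are of course the same), and conclude by Serre duality that the obstruction is $H^0(\oc_C(K_X))$, which vanishes because $K_X\cdot C<0$. Your remark about reading ``$R\in\mid H-C\mid$'' as $R\sim nH-C$ is also apt; the paper's own computation uses $R\cup C\sim nH$.
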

\begin{proof}
From the cohomology exact sequence deduced from the following short
exact sequence
$$0\rightarrow \ic_{C,X}(n) \rightarrow \ic_{\Delta,X}(n)\rightarrow \ic_{\Delta, C}(n)\rightarrow 0,$$
and taking into account of $H^1(\ic _{C,X}(n))=0$ (remember
(\ref{hypC})), we see that it suffices to prove $H^1(\ic_{\Delta,
C}(n))=0$. On the other hand, by adjunction we have
$$\omega_C\cong \oc_C(K_X+C)\cong \oc_C(K_X+nH-R)\cong \ic_{\Delta, C}(K_X+nH).$$
In view of the hypothesis $C\cdot K_X<0$, we conclude at once:
$$H^1(\ic _{\Delta,C}(n))\cong H^1(\omega_C(-K_X))\cong H^0(\oc_C(K_X))=0.$$
\end{proof}

\begin{remark}
We observe that the hypothesis of Proposition above is satisfied
when either $K_X$ is negative or when $C$ is an exceptional curve.

\end{remark}

\medskip
\subsection{Defective hypersurfaces of $\mathbb{P} ^{2n}$}

Let $X=\mathbb{P} ^{2n}$, $L=\mathbb{P} ^n$ a linear subspace of dimension $n\geq 2$, and
$$
f=a_0x_0+\dots+a_{n-1}x_{n-1}=0
$$
a general hypersurface of degree $k\geq 2$, containing $L$. The singular locus $\Delta$ of $f=0$ is a set of $\delta=(k-1)^n$
nodes, complete intersection of type $(k-1,\dots,k-1)$ in $L$. Let
$\mathcal I_{\Delta}$ the ideal sheaf of $\Delta$ in $L$. Then, the Koszul complex of $\Delta$ in $L$ is:
$$
0\to \wedge^n E^*\to \wedge^{n-1} E^*\to\dots E^*\to \mathcal
I_{\Delta}\to 0,
$$
with $E={\mathcal O_{\mathbb P^n}(k-1)}^{n}$. We observe that
$\wedge^n E^*=\mathcal O_{\mathbb P ^n}(n(1-k))$.

\bigskip
The set $\Delta$ imposes independent conditions to the linear system
$|\mathcal O_X(k)|$ if and only if  $h^1(\mathbb P^n,\mathcal
I_{\Delta}(k))=0$. For this to happen it suffices
$$
h^n(\mathbb P ^n,\mathcal O_{\mathbb P ^n}(n(1-k)+k))=0,
$$
namely that
$$
k<\frac{2n+1}{n-1}.
$$
This is true when $n=2$ e $k\leq 4$, $n=3$
and $k\leq 3$, $n\geq 4$ e $k= 2$.

Since a hypersurface containing a $\mathbb{P} ^n$ has defect, in all
these cases the nodes impose independent conditions and $\hc^{1}
IC(R^{2n-1}\pi{_*}\mathbb{Q})_D[-N]$ is non-trivial.

\medskip

\subsection{Defective hypersurfaces in a complete intersection of quadrics}

Let $X=X^{2n}\subset \mathbb P^{2n+h}$ be a general complete
intersection of $h$ quadrics containing a linear subspace $L=\mathbb
P ^n$:
$$
X=\begin{cases} q_1=a_{0}^1x_0+\dots+a_{n+h-1}^1x_{n+h-1}=0\\
\dots\\
q_h=a_{0}^hx_0+\dots+a_{n+h-1}^hx_{n+h-1}=0.
\end{cases}
$$
We cut $X$ with a further general quadric $Q$, containing $L$, with equation:
$$
q=\alpha_{0}x_0+\dots+\alpha_{n+h-1}x_{n+h-1}=0.
$$
The singular locus $\Delta$ of $X\cap Q$ is the degeneracy locus  $D_h(\phi)$ of a general morphism
$$
\phi:\mathcal O_{\mathbb P ^n}^{h+1}\to \mathcal O_{\mathbb P ^n}(1)^{h+n}.
$$
Thus, $\Delta$ is a set of $\delta=\binom{n+h}{n}$ nodes. The Eagon-Northcott complex of $\phi$ is:
$$
0\to S^{n-1} \mathcal O_{\mathbb{P}^n}^{h+1}\to S^{n-2} \mathcal
O_{\mathbb{P}^n}^{h+1}\otimes \mathcal O_{\mathbb P ^n}(1)^{h+n}\to
S^{n-3} \mathcal O_{\mathbb P ^n}^{h+1}\otimes\wedge^2 \mathcal
O_{\mathbb P ^n}(1)^{h+n}\to\dots \quad\quad\quad\quad\quad\quad
$$
$$
\quad\quad\quad\quad\quad\quad\quad\quad\quad\quad\quad\quad\quad\quad\dots
\to \wedge^{n-1} \mathcal O_{\mathbb P ^n}(1)^{h+n}\to \mathcal
I_{\Delta}(h+n)\to 0,
$$
where $\mathcal I_{\Delta}$ is the ideal sheaf of $\Delta$ in
$L$.

\bigskip
The set $\Delta$ imposes independent conditions to the linear system  $|\mathcal
O_X(2)|$ if and only if $h^1(\mathbb P ^n,\mathcal
I_{\Delta}(2))=0$. By the Eagon-Northcott complex above, since
$S^{n-1} \mathcal O_{\mathbb P ^n}^{h+1}$ is a direct sum of $\mathcal
O_{\mathbb P ^n}$, we have that $h^1(\mathbb P ^n,\mathcal I_{\Delta}(2))=0$ as soon as
$$
h^n(\mathbb P ^n,\mathcal O_{\mathbb P ^n}(-h-n+2))=0,
$$
namely if
$$
h^0(\mathbb P ^n,\mathcal O_{\mathbb P ^n}(h-3))=0.
$$
The last condition is satisfied if $1\leq h\leq 2$. When $h=1$
we have $n+1$ nodes in $\mathbb P ^n$, if $h=2$ we have
$\frac{1}{2}(n+2)(n+1)$ nodes in $\mathbb P^n$.

Also in this case, since a hypersurface containing a $\mathbb{P} ^n$
has defect, the nodes impose independent conditions and $\hc^{1}
IC(R^{2n-1}\pi{_*}\mathbb{Q})_D[-N]$ is non-trivial.

\end{document}